\newfont{\footsc}{cmcsc10 at 8truept}
\newfont{\footbf}{cmbx10 at 8truept}
\newfont{\footrm}{cmr10 at 10truept}
\renewcommand{\ps@plain}{%
\renewcommand{\@oddfoot}{\footsc 
\hfil\footrm\thepage}}
\theoremstyle{plain}
\newtheorem{theorem}{Theorem}
\newtheorem{lemma}[theorem]{Lemma}
\newtheorem{conjecture}[theorem]{Conjecture}
\theoremstyle{definition}
\newtheorem{fact}[theorem]{Fact}
\newtheorem{remark}[theorem]{Remark}
\newcommand{\sumsb}[1]{\sum_{\substack{#1}}}  
\newcommand{\muset}{M}
\newcommand{\nkmin}[2]{(#1,#2)}
\newcommand{\EPn}[1]{EP(#1)}
\newcommand{\uncEPn}[1]{EP_{\text{unc}}(#1)}
\newcommand{\EPmun}[1]{EP_{\mu > 0}(#1)}
\newcommand{\mup}[1]{EP'_{\mu>0}(#1)}
\newcommand{\mupall}{EP'_{\mu>0}}
\newcommand{\irrmu}[1]{\text{Irr}(#1)}
\newcommand{\pww}{P_{w,w}(q)}
\newcommand{\puv}{P_{u,v}(q)}
\newcommand{\pxw}{P_{x,w}(q)}
\newcommand{\pxws}{P_{x,ws}(q)}
\newcommand{\pxsws}{P_{xs,ws}(q)}
\newcommand{\psxw}{P_{sx,w}(q)}
\newcommand{\pxsw}{P_{xs,w}(q)}
\newcommand{\pxz}{P_{x,z}(q)}
\newcommand{\bbZ}{\mathbb{Z}}
\newcommand{\simref}{\mathcal{S}}
\DeclareMathOperator{\rds}{rds}
\DeclareMathOperator{\lds}{lds}
\newcommand{\digw}{\mathcal{D}_w}
\newcommand{\digx}{\mathcal{D}_x}
\newcommand{\dxw}{d_{x,w}}
\newcommand{\simls}{\sim_{ls}}
\newcommand{\simlspm}{\sim}
\newcommand{\simlspmk}[1]{\overset{#1}{\simlspm}}
\newcommand{\ec}[2]{[[#1,#2]]}
\newcommand{\ecls}[2]{[[#1,#2]]_{ls}}
\title{Equivalence classes for the $\mu$-coefficient of
  Kazhdan-Lusztig polynomials in $S_n$\footnote{Supported in part by
    National Security Agency grant H98230-09-1-0023.}}
\author{Gregory S. Warrington \\
University of Vermont \\
Burlington, VT 05401\\
gwarring@cems.uvm.edu}
\begin{document} 
\maketitle

\begin{abstract}
  We study equivalence classes relating to the Kazhdan-Lusztig
  $\mu(x,w)$ coefficients in order to help explain the scarcity of
  distinct values.  Each class is conjectured to contain a
  ``crosshatch'' pair.  We also compute the values attained by
  $\mu(x,w)$ for the permutation groups $S_{10}$ and $S_{11}$.
\end{abstract}

\noindent
MSC: {05E10, 20F55}\\
Keywords: Kazhdan-Lusztig polynomials, equivalence classes, mu-coefficient

\section{Introduction}
\label{sec:intro}

The Kazhdan-Lusztig polynomials, introduced in~\cite{K-L1}, arose in
the context of constructing representations of the Hecke algebra
associated to a Weyl group.  It was soon apparent that these
polynomials encode important information relating to geometry and
representation theory.  For example, they encode the singularities of
Schubert varieties and the multiplicities of irreducibles in Verma
modules~\cite{BeilBern,BryKash,K-L1}.  They are also of interest from
a purely combinatorial viewpoint (see ~\cite{BjornerBrenti}).

We restrict our attention to the type-$A$ case in which there is one
Kazhdan-Lusztig polynomial $\pxw$ associated to every pair of
permutations $x,w\in S_n$.  Kazhdan and Lusztig give a simple
recursion for these polynomials in their original paper (see
Section~\ref{sec:kl} below).  However, our combinatorial understanding
of these polynomials is still far from complete.  For example, there
is neither a \emph{combinatorial} proof that the coefficients of
$\pxw$ are nonnegative nor a closed formula for the degree of a given
polynomial.  (A non-combinatorial proof of nonnegativity arises from
the interpretation of the coefficients of Kazhdan-Lusztig polynomials
in terms of intersection cohomology~\cite{K-L2}.)  The reason these
problems are still open is that there is a correction term in the
recursion that is controlled by a poorly understood number,
$\mu(x,w)$.  While there are known to be a few simple, combinatorial
\emph{necessary} conditions for $\mu(x,w)$ to be nonzero, these
conditions are by no means sufficient.  In fact, there are no
nontrivial sufficient conditions known.  A combinatorial rule for the
value $\mu(x,w)$ would likely lead to insights wherever
Kazhdan-Lusztig polynomials arise.

A major difficulty in the study of these $\mu$-coefficients is that
(as shown in~\cite{01conj}) $S_{10}$ is the smallest symmetric group
for which $\mu(x,w)$ can be anything other than $0$ or $1$.  There is
little overlap between what is computationally feasible and what is
computationally illuminating.  Nonetheless, there are a number of
important combinatorial results regarding these polynomials.  See the
book by Bj\"orner and Brenti~\cite{BjornerBrenti} for an overview and
the papers of Brenti (such as~\cite{brentilat} and~\cite{brentiinv})
in particular.

The organization of the paper is as follows.  Section~\ref{sec:dfns}
provides the necessary definitions while Section~\ref{sec:props}
outlines the properties of $\mu(x,w)$ we will be using from the
literature.  The results of this paper are of two types.  First, we
present new data regarding the values $\mu(x,w)$ takes; how we do this
is outlined in Section~\ref{sec:mup}.  Set $\muset(n) = \{\mu(x,w):\,
x,w\in S_n\}\setminus\{0\}$.
\begin{theorem}\label{thm:numval}
  We have
  \begin{itemize}
  \item $\muset(10) = \{1,4,5\}$,
  \item $\muset(11) = \{1,3,4,5,18,24,28\}$ and
  \item $\muset(12) \supseteq \{1,2,3,4,5,6,7,8,18,23,24,25,26,27,28,158,163\}$.
  \end{itemize}
\end{theorem}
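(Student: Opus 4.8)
The theorem is a finite computation, so a proof amounts to an algorithm together with the reductions that make it feasible; I would organize it as follows. The naive approach --- loop over all pairs $x,w$ and run the Kazhdan--Lusztig recursion of Section~\ref{sec:kl} --- is hopeless already for $n=10$, where there are on the order of $(10!)^2\approx 10^{13}$ pairs. So the first step is to shrink the pair set using the $\mu$-preserving operations collected in Section~\ref{sec:props}: the identities $\mu(x,w)=\mu(x^{-1},w^{-1})=\mu(w_0xw_0,w_0ww_0)$; the relation $\pxw=\psxw$ whenever $sw<w$ (and its right-handed analogue), which lets one replace $x$ by the minimal-length element of its coset for the standard parabolic generated by $\mathcal{L}(w)$, resp.\ $\mathcal{R}(w)$; invariance of $\mu$ under the left and right star operations; and the standard necessary conditions for $\mu(x,w)\ne 0$ (namely $x<w$ in Bruhat order, $\ell(w)-\ell(x)$ odd, and the descent-set constraints). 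What survives is a reduced set of representative pairs --- the set $\mup{n}$ described in Section~\ref{sec:mup} --- with $\muset(n)=\{\mu(x,w):(x,w)\in\mup{n}\}\setminus\{0\}$.

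For $n=10$ and $n=11$ I would enumerate $\mup{n}$ exhaustively by computer and, for each representative $(x,w)$, evaluate $\pxw$ through the recursion with full memoization of every intermediate polynomial $\puv$ and coefficient $\mu(u,v)$ --- the memo table being shared across all representatives --- and read off $\mu(x,w)$. The union of the values obtained is $\muset(10)$, resp.\ $\muset(11)$, exactly, and comparing with the claimed sets closes these two cases. Useful sanity checks: $\muset(10)\ne\{1\}$ recovers the counterexamples of~\cite{01conj}, and $\muset(10)\subseteq\muset(11)$ must hold, consistently with the parabolic embedding $S_{10}\hookrightarrow S_{11}$ under which $\mu$ is unchanged.

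For $n=12$ the reduced enumeration is still too large to finish, so here I would prove only the stated inclusion. The embedding $S_{11}\hookrightarrow S_{12}$ gives $\muset(11)\subseteq\muset(12)$, handling $\{1,3,4,5,18,24,28\}$. For each remaining value $v\in\{2,6,7,8,23,25,26,27,158,163\}$ I would exhibit one explicit pair $(x,w)\in S_{12}\times S_{12}$ with $\mu(x,w)=v$ --- obtained from targeted constructions such as crosshatch-type pairs and small modifications of the extremal pairs found for $S_{11}$ --- and certify the value by a single run of the recursion on that pair. Since only ``$\supseteq$'' is claimed, a finite, well-chosen list of pairs is enough.

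The principal obstacle is making the $n=11$ computation actually terminate: even after all the symmetry reductions, $|\mup{11}|$ is large and each polynomial evaluation recursively spawns many subpolynomials, so the real content of the argument is (i) verifying the reductions of Section~\ref{sec:props} are strong enough to cut the search space to something tractable, (ii) engineering the recursion --- evaluation order, hashing of pairs, reuse of the memo table --- so the computation fits in available time and memory, and (iii) for $n=12$, locating pairs that realize the large values $158$ and $163$, which seems to need ingenuity rather than brute force. A secondary issue is trust in the computation, best addressed by an independent reimplementation together with the internal checks $\muset(10)\subseteq\muset(11)\subseteq\muset(12)$ and agreement with previously published values.
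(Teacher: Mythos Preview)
Your overall plan matches the paper's: reduce the pair set using the identities of Section~\ref{sec:props}, then run the recursion on what survives, and for $n=12$ certify only the inclusion by explicit witnesses.  Two points deserve correction.

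First, you omit compression (Fact~\ref{fact:compress}), which is one of the paper's principal reduction tools.  Restricting to \emph{uncompressible} extremal pairs removes roughly 90\% of the extremal pairs in $S_{10}$, and the paper further iterates compression with extremalization (Figure~\ref{fig:rinserepeat}); without this the $S_{10}$ database would be an order of magnitude larger.

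Second, you misidentify $\mup{n}$.  That symbol is defined in Section~\ref{sec:ecs} as the set of pairs with $\mu>0$, which one cannot know in advance.  The reduced set actually used for $n=11$ in Section~\ref{sec:mup} is the set of \emph{irreducible} pairs: odd extremal pairs whose $\simls$-class (under L-S moves alone) contains no compressible, non-extremal-non-covering, or incomparable pair.  This filter is the crux.  Your ``full memoization of every intermediate polynomial'' is precisely what the paper does for $n\le 10$, but Remark~\ref{remark:mem} explains that the shared memo table for $S_{11}$ would run to roughly 50\,GB of RAM and was infeasible; the paper instead filters first to the $54\,214$ irreducible pairs (cheap, since L-S moves and compressibility tests require no Kazhdan--Lusztig computation) and then computes only those polynomials, at a cost of a few thousand CPU hours.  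Without this filtering step your $n=11$ computation, as described, would not fit in memory.

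For $n=12$ your plan coincides with the paper's: inherit $\muset(11)$ via the parabolic embedding and certify each remaining value by a single explicit pair (Table~\ref{table:klpolys}), the candidates being guided by Conjecture~\ref{conj:main} on crosshatch pairs.
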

Particular pairs $x,w$ realizing each of these values are given in
Table~\ref{table:klpolys}.  The only $\mu$-values that have
already appeared in the literature for $S_n$ are $\{0,1,2,3,4,5\}$.

We also offer computer code~\cite{mycode} that can quickly produce a
database of all Kazhdan-Lusztig polynomials in $S_{10}$; this code is
discussed in Section~\ref{sec:klcomp}.  There are over one billion
``extremal pairs'' $(x,w)$ in $S_{10}$ for which one might hope
$\mu(x,w) > 0$.  More than 100 million of these pairs cannot be
reduced to equivalent pairs in smaller symmetric groups.  Altogether,
approximately one million different polynomials appear.  Even stored
efficiently this yields a gigabyte of data.  The comparable database
for $S_{11}$ would be on the order of $50$ times larger.

Second, we consider the question of why there are so few different
values of $\mu(x,w)$.  For example, in $S_{10}$ there are $664\,752$
non-covering pairs $x < w$ for which $\mu(x,w) > 0$.  Yet, the only
nonzero values taken are $1$, $4$ and $5$.  We explain this in
Section~\ref{sec:ecs} by showing that for $S_{10}$ and $S_{11}$, the
$\mu$-positive pairs fall into a handful of equivalence classes.  The
$\mu$-coefficient is constant on each class by construction.  The
equivalence relation, $\simlspm$, is defined in Section~\ref{sec:ecs};
the corresponding class of a pair $(x,w)$ is denoted $\ec{x}{w}$.  A
class is \emph{$n$-minimal} if it does not intersect $S_m$ for $m <
n$.  Pairs in $n$-minimal classes are also referred to as $n$-minimal
themselves.  As a consequence of Theorem~\ref{thm:numval}, the number
of $10$- and $11$-minimal classes is at least $2$ and $4$,
respectively.
\begin{theorem}\label{thm:numequiv}
  The $2$-minimal class $\ec{01}{10}$ is the only class intersecting
  $S_m$ for any $m < 10$.  The number of $10$- and $11$-minimal
  classes is at most $4$ and $7$, respectively.
\end{theorem}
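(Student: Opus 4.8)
The plan is to establish both halves of the theorem by a single finite computation. In outline: for each relevant $n$, enumerate the extremal pairs $(x,w)$ in $S_n$ with $\mu(x,w)>0$ --- the set I will write $\EPmun{n}$ --- push each of them through the reduction moves underlying $\simlspm$ until a canonical representative is reached, and then count the distinct representatives that are genuinely $n$-minimal. The necessary conditions for $\mu(x,w)>0$ recalled in Section~\ref{sec:props} restrict the enumeration to extremal pairs, the code of Section~\ref{sec:klcomp} supplies the Kazhdan-Lusztig data from which one reads off which extremal pairs have $\mu>0$, the relation $\simlspm$ and its moves are as defined in Section~\ref{sec:ecs}, and the procedure of Section~\ref{sec:mup} carries out the enumeration and canonicalization, producing the representative set $\mup{n}$.

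For the first assertion it suffices to work in $S_9$. Indeed $S_m\subseteq S_9$ for $m\le 9$ under the standard embedding, and $\mu$, extremality and $\simlspm$ are all compatible with that embedding, so a class meets some $S_m$ with $m<10$ precisely when it contains a member of $\EPmun{9}$. I would list $\EPmun{9}$ from the $S_9$ Kazhdan-Lusztig data (the covering pairs, which are immediate, together with the short list of non-covering $\mu$-positive pairs) and check that $\simlspm$-reduction carries every element of $\EPmun{9}$ to the canonical representative of $\ec{01}{10}$. This shows $\ec{01}{10}$ is the only class meeting $S_m$ for $m<10$; in particular $\mu\equiv 1$ on $S_9$.

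For the upper bounds, consider $n=10$ first. Using the full database of Kazhdan-Lusztig polynomials in $S_{10}$ produced by the code of Section~\ref{sec:klcomp}, one extracts $\EPmun{10}$ (the $664\,752$ non-covering $\mu$-positive pairs together with the covering pairs), applies $\simlspm$-reduction to each element, discards every pair whose reduction lands inside some $S_m$ with $m<10$ --- by the first assertion such a pair lies in $\ec{01}{10}$ --- and sorts the surviving, $10$-minimal pairs by canonical form. The computation returns at most $4$ distinct forms. On the other hand the first assertion forces $\mu\equiv 1$ on $S_9$, so each of the values $4,5\in\muset(10)$ is attained only by $10$-minimal pairs, and since $\mu$ is constant on classes these occupy at least two distinct $10$-minimal classes; hence the count lies between $2$ and $4$. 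For $n=11$ the same scheme applies in outline, but the analogous database (roughly $50$ times larger) is not materialized: the procedure of Section~\ref{sec:mup} is organized around $\simlspm$-reduction so that pairs reducing into $S_{10}$ are resolved from the complete $S_{10}$ results, while the rest collapse onto a small explicit set of $11$-minimal representatives whose Kazhdan-Lusztig polynomials --- hence $\mu$-values --- are computed one interval at a time; this yields $\mup{11}$ and at most $7$ distinct $\mu$-positive $11$-minimal forms. The lower bound $4$ is forced because $3,18,24,28\in\muset(11)$ and none of these values can occur in any $S_m$ with $m<11$.

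The two places where the argument is delicate are these. First, scale: the $S_{11}$ pass cannot afford to build the polynomial database (and, for the same reason, cannot inspect every extremal pair of $S_{11}$ by brute force), so one must verify that the reduction-driven bookkeeping of Section~\ref{sec:mup} genuinely accounts for all of $\EPmun{11}$ and leaves only a manageable residue of intervals for direct Kazhdan-Lusztig computation. Second, $\simlspm$-reduction is not known to be confluent: the procedure assigns each pair some canonical form, but two pairs in a common class can receive different forms if no connecting sequence of moves is found. Consequently the computation delivers only \emph{upper} bounds on the number of $n$-minimal classes; the matching lower bounds ($2$ for $n=10$, $4$ for $n=11$) are immediate from Theorem~\ref{thm:numval} since $\mu$ is a class invariant, and closing the gap would require either a completeness theorem for $\simlspm$ or a direct proof that the surviving canonical forms are pairwise inequivalent.
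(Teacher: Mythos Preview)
Your outline misses the one ingredient that actually makes the computation go through: \emph{decompression}. You frame the procedure as ``$\simlspm$-reduction'' that pushes a pair down to a canonical form in some smaller $S_m$, but the paper's own data (Table~\ref{tab:coalesce}) show this fails already for $n=9$: at $k=0$ (no decompression allowed) there remain three separate $(9,0)$-minimal classes with $\mu=1$, and Figure~\ref{fig:9reduct} exhibits a specific $S_9$ pair whose connection to $\ec{01}{10}$ \emph{requires} first decompressing into $S_{10}$. Likewise for $n=10$ the $k=0$ count is $31+10+27=68$, not $4$; and for $n=11$ it is several hundred, not $7$. The bounds in the theorem only emerge once one allows the search to move \emph{up} into $S_{n+1}$ and $S_{n+2}$ (i.e.\ $k=2$) and then computes connected components. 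Your two ``delicate points'' (scale, and non-confluence) are real, but the missing point---that there is no reduction-only algorithm, and one must fix a decompression bound $k$ and do a bidirectional breadth-first search---is the one that breaks your argument as written.

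There is also a conflation of two distinct procedures. Section~\ref{sec:mup} does not canonicalize pairs or compute $\simlspm$-classes; it uses the coarser relation $\simls$ (L-S moves only) to isolate the \emph{irreducible} pairs whose $\mu$-values might be new, so that one can compute $\muset(11)$ without the full database. The equivalence-class computation lives in Section~\ref{sec:ecs}: one builds an adjacency matrix on $\mup{n}$ (which is simply the set of all $\mu$-positive pairs, not a set of representatives as you describe it), adds edges from the bounded BFS with L-S moves, symmetries, compressions \emph{and decompressions up to $S_{n+k}$}, routes anything touching a smaller $S_m$ to a sink vertex, and reads off the connected components. Theorem~\ref{thm:numequiv} then follows directly from the $k=2$ column of Table~\ref{tab:coalesce}; since $\simlspmk{k}$-classes can only coalesce further as $k$ grows, these counts are automatically upper bounds for the genuine $\simlspm$-classes.
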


Finally, in Section~\ref{sec:reps} we speculate that each
$\simlspm$-equivalence class contains a ``crosshatch'' pair.

\section{Definitions}\label{sec:dfns}
\subsection{The symmetric group}
\label{sec:sn}

The symmetric group, $S_n$, has the following presentation as a
Coxeter group:
\begin{equation}
\begin{aligned}
  S_n = \langle s_1,\ldots,s_{n-1}\, :\, &s_i^2 = 1, \\
  &s_is_{i\pm 1}s_i = s_{i\pm 1}s_is_{i\pm 1}, \text{ and }\\
  &s_is_j = s_js_i, \text{ for } |i-j| > 1\rangle.
\end{aligned}
\end{equation}
We write $\simref$ for the set of generators $\{s_1,\ldots,s_{n-1}\}$.
The group $S_n$ is often described as the group of bijections from
$\{0,1,\ldots,n-1\}$ to itself (i.e., permutations) under the usual
function composition.  From this perspective, it is most convenient to
identify the generator $s_i$ with the adjacent transposition that
switches $i-1$ and $i$.  For clarity in examples, we will write $a$
for $10$, $b$ for $11$, etc.  \emph{One-line notation} for $\sigma \in
S_n$ lists the elements $[\sigma(0),\sigma(1),\ldots,\sigma(n-1)]$ in
order.  We often omit commas and brackets.  For example, the
permutation $\sigma \in S_6$ that sends $i$ to $5-i$ would either be
written $[5,4,3,2,1,0]$ or simply $543210$.

The group $S_n$ has the structure of a ranked poset as follows.  An
\emph{inversion} of a permutation $w = [w(0),w(1),\ldots,w(n-1)]$ is a
pair $i < j$ for which $w(i) > w(j)$.  The \emph{length} of $w$,
$\ell(w)$, is the total number of inversions.  The rank of an element
is then given by its length.  To define the partial order under which
we will be relating our elements, we first make two auxiliary
definitions.  Let $x,w \in S_n$ and $p,q\in \bbZ$.  Define $r_w(p,q) =
|\{i \leq p: w(i) \geq q\}|$ and the \emph{difference function}
$\dxw(p,q) = r_w(p,q) - r_x(p,q)$.  Then the \emph{Bruhat partial
  order}, $\leq$, is determined by setting $x\leq w$ if and only if
$\dxw(p,q) \geq 0$ for all $p,q$.  This definition is equivalent to
more common ones such as the tableau criterion
(cf.~\cite{BLak,Fulton-book,Hum}).  

For a permutation $w$, let $\digw$ denote the permutation matrix
oriented such that for each $i$ there is a 1 in the $i$-th column from
the left and $w(i)$-th row from the bottom.  We will frequently
display a pair of permutations $x$ and $w$ graphically using
\emph{Bruhat pictures}: Such a picture consists of $\digw$ and $\digx$
overlaid along with shading given by the difference function.  An
example is given in Figure~\ref{fig:brupic}.  Entries of $\digx$
(resp., $\digw$) are denoted by black disks (resp., circles).
Positions corresponding to $1$s of both $\digx$ and $\digw$ (termed
\emph{capitols}) are denoted by a black disk and a larger, concentric
circle.  Shading denotes regions in which $\dxw \geq 1$.  Successively
darker shading denotes successively higher values of $\dxw$.
\begin{figure}[htbp]
  \centering
      {\scalebox{.6}{\includegraphics{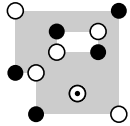}}}
      \caption{Bruhat picture for $x = [2,0,4,1,3,5]$, $w = [5,2,3,1,4,0]$.}
      \label{fig:brupic}
\end{figure}

Finally, there are two sets we associate to any permutation $w$.  We
define the \emph{right descent set} of $w$, $\rds(w)$, as $\{s\in
\simref: ws < w\}$.  Similarly, the \emph{left descent set} is
$\lds(w) = \{s\in\simref: sw < w\}$.

\subsection{Kazhdan-Lusztig polynomials}
\label{sec:kl}

We now define the Kazhdan-Lusztig polynomials $\pxw$ associated to
pairs of elements $x,w\in S_n$.  For motivation and more general
definitions applicable to any Coxeter group, we refer the reader
to~\cite{Hum,K-L1}.  Set
\begin{equation}\label{eq:mudef}
  \mu(x,w) = \text{ coefficient of }
             q^{(\ell(w)-\ell(x)-1)/2}\text{ in }\pxw
\end{equation}
and define $c_s(x) = 1$ if $xs < x$; $c_s(x) = 0$ if $xs > x$.  We
have the following paraphrased theorem of Kazhdan and Lusztig:
\begin{theorem}[\cite{K-L1}]\label{thm:kldef}
  There is a unique set of polynomials $\{\pxw\}_{x,w\in S_n}$ such
    that, for $x,w\in S_n$:
  \begin{itemize}
  \item $\pww = 1$,
  \item $\pxw = 0$ when $x\not\leq w$ and
  \item for $s\in\rds(w)$, 
    \begin{equation}\label{eq:std}
      \pxw = q^{c_s(x)}\pxws + q^{1-c_s(x)}\pxsws - \sumsb{z \leq
        ws\\ zs < z} \mu(z,ws)q^{\frac{\ell(w)-\ell(z)}{2}} \pxz.      
    \end{equation}
  \end{itemize}
  When $x < w$ we have an upper bound on the degrees: $\deg(\pxw) \leq
  \frac{\ell(w)-\ell(x)-1}{2}$.
\end{theorem}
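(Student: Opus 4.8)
The plan is to treat the two assertions — existence and uniqueness of the family $\{\pxw\}$, and the degree bound — separately, and to dispatch \emph{uniqueness} first, by strong induction on $\ell(w)$ using only the three bulleted conditions. For $w$ the identity, $\pww=1$ and $\pxw=0$ for every $x\neq w$ (since then $x\not\leq w$). For $\ell(w)>0$, fix any $s\in\rds(w)$ (nonempty, as $w$ is not the identity); then $\ell(ws)=\ell(w)-1$, and every pair appearing on the right-hand side of~\eqref{eq:std} — the pairs $(x,ws)$ and $(xs,ws)$, the pairs $(z,ws)$ from which the scalars $\mu(z,ws)$ are extracted, and the pairs $(x,z)$ with $z\leq ws$ — has larger element of length $<\ell(w)$, hence is already determined by the inductive hypothesis. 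Thus~\eqref{eq:std} determines $\pxw$ for every $x<w$, and together with $\pww=1$ and the vanishing for $x\not\leq w$ the whole family is forced. Note that this argument uses neither the degree bound nor the fact that~\eqref{eq:std} is required to hold for \emph{all} $s\in\rds(w)$ at once.

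For \emph{existence} — and the degree bound — I would construct the Kazhdan--Lusztig basis inside the Hecke algebra $\calh$ of $S_n$ over $\bbZ[q^{1/2},q^{-1/2}]$, with standard basis $\{T_w\}_{w\in S_n}$ and relations $T_xT_s=T_{xs}$ for $xs>x$ and $T_s^2=(q-1)T_s+q$. Every $T_w$ is a unit, and one has the bar involution $h\mapsto\overline h$, the ring automorphism with $\overline{q^{1/2}}=q^{-1/2}$ and $\overline{T_w}=\bigl(T_{w^{-1}}\bigr)^{-1}$; expanding $\overline{T_w}$ in the standard basis produces the $R$-polynomials, whose elementary properties ($R_{w,w}=1$, $\deg R_{x,w}=\ell(w)-\ell(x)$, and their basic recursion) I would record as a preliminary lemma. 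The goal is, for each $w$, a bar-invariant element
\[
 C'_w=q^{-\ell(w)/2}\sum_{x\leq w}\pxw\,T_x,\qquad \pww=1,\quad \deg\pxw\leq\tfrac{1}{2}\bigl(\ell(w)-\ell(x)-1\bigr)\ \text{ for } x<w,
\]
with every coefficient an honest polynomial in $q$. Granting existence and uniqueness of $C'_w$, the degree bound in the theorem is exactly its defining property, and comparing coefficients of $T_x$ in the Hecke-algebra identity $C'_{ws}\,C'_s=C'_w+\sum_{z<ws,\ zs<z}\mu(z,ws)\,C'_z$ recovers~\eqref{eq:std} verbatim.

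I would build $C'_w$ by induction on $\ell(w)$, with $C'_e=T_e$. For $\ell(w)>0$, pick $s\in\rds(w)$, set $v=ws$, and note that the product $C'_v\,C'_s$, where $C'_s=q^{-1/2}(T_s+1)$, is bar-invariant (a product of bar-invariants) with coefficient of $T_w$ equal to $q^{-\ell(w)/2}$. Then define
\[
 C'_w\;:=\;C'_v\,C'_s\;-\!\!\sum_{z<v,\ zs<z}\!\!\mu(z,v)\,C'_z ,
\]
which is again bar-invariant (a $\bbZ$-combination of bar-invariant elements, the scalars $\mu(z,v)$ being already defined since $\ell(v)<\ell(w)$) and still has coefficient $q^{-\ell(w)/2}$ at $T_w$; the computation to carry out is that its remaining $T_x$-coefficients are polynomials in $q$ of degree at most $\tfrac{1}{2}(\ell(w)-\ell(x)-1)$, so $C'_w$ has the required form. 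Finally, a standard uniqueness lemma — a bar-invariant element of $\calh$ whose standard-basis coefficients obey a suitable strict degree bound, with one coefficient prescribed, is uniquely determined — shows $C'_w$ does not depend on the chosen $s$, so every instance of~\eqref{eq:std} holds simultaneously.

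\emph{The main obstacle.} The genuine content lies in two places. First, the uniqueness lemma for bar-invariant elements under a degree constraint is what forces~\eqref{eq:std} to hold for \emph{every} $s\in\rds(w)$, not merely the one used in the construction, and it is the reason one argues inside $\calh$ rather than purely recursively. Second, the verification that subtracting exactly the combination $\sum_{z<v,\,zs<z}\mu(z,v)\,C'_z$ repairs the degree of $C'_v\,C'_s$ — i.e. that any excess of $C'_v\,C'_s$ past the degree bound is accounted for by these bar-invariant correction terms — which is precisely what makes the induction close. Assembling the $R$-polynomial preliminaries, and checking that the coefficients of $C'_w$ land in $\bbZ[q]$ rather than merely $\bbZ[q^{1/2}]$, are comparatively routine. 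Should one wish to avoid $\calh$ altogether, the fallback is to \emph{define} $\pxw$ via~\eqref{eq:std} for a single fixed choice of $s$ per $w$ and then prove independence of $s$ and the degree bound by a parallel induction; but the Hecke-algebra construction is cleaner and is the classical route.
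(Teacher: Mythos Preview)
The paper does not prove this theorem at all: it is stated as a ``paraphrased theorem of Kazhdan and Lusztig'' with citation to~\cite{K-L1}, and no proof is supplied. Your proposal is a faithful outline of the classical Kazhdan--Lusztig argument from that very reference (as presented, e.g., in Humphreys), so in that sense your approach coincides with the source the paper defers to. The sketch is correct in its architecture: uniqueness by induction on $\ell(w)$ using the recursion alone, existence via the bar-invariant elements $C'_w$ in the Hecke algebra, and recovery of~\eqref{eq:std} from the multiplication rule $C'_{ws}C'_s = C'_w + \sum_{zs<z<ws}\mu(z,ws)C'_z$. You have also correctly identified the two load-bearing steps --- the uniqueness lemma for bar-invariant elements with bounded-degree coefficients, and the degree check after subtracting the $\mu$-correction --- so there is no gap to flag.
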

Note that $\mu(x,w)$ is the coefficient of the highest possible power
of $q$ in $\pxw$.

\section{Properties satisfied by $\mu(x,w)$}
\label{sec:props}

We now proceed to describe various well-known properties satisfied by
the $\mu$-coefficient.  If $x\not\leq w$, then $\mu(x,w)$ is
automatically zero.  So assume $x \leq w$.  There are two
easily recognized instances in which the $\mu$-coefficient is zero.
The first follows directly from the definitions since $P_{x,w}$ is a
polynomial in $q$ rather than $q^{1/2}$.
\begin{fact}\label{fact:even}
  If $\ell(w)-\ell(x)$ is even, then $\mu(x,w) = 0$.
\end{fact}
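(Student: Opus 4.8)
The plan is to read the conclusion straight off the definition \eqref{eq:mudef} of $\mu(x,w)$, using only the fact that each Kazhdan--Lusztig polynomial $P_{x,w}$ is an honest polynomial in $q$ (it lies in $\bbZ[q]$, so all of its exponents are nonnegative integers). First I would note that if $\ell(w)-\ell(x)$ is even then $\ell(w)-\ell(x)-1$ is odd, so the exponent $(\ell(w)-\ell(x)-1)/2$ occurring in \eqref{eq:mudef} is not an integer (it is a proper half-integer, equal to $-1/2$ in the degenerate case $\ell(x)=\ell(w)$). Since $P_{x,w}$ contains no monomial of non-integral degree, the coefficient of $q^{(\ell(w)-\ell(x)-1)/2}$ in $P_{x,w}$ is zero by convention; but this coefficient is exactly $\mu(x,w)$, so $\mu(x,w)=0$.

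The only point needing any care is the assertion that $P_{x,w}\in\bbZ[q]$ in the first place. This is standard and is in fact built into the wording of Theorem~\ref{thm:kldef}; alternatively one proves it by strong induction on $\ell(w)$ from the recursion \eqref{eq:std} (simultaneously with the statement of this Fact). The base cases $P_{w,w}=1$ and $P_{x,w}=0$ for $x\not\leq w$ are polynomials in $q$. In the inductive step, $q^{c_s(x)}P_{x,ws}$ and $q^{1-c_s(x)}P_{xs,ws}$ lie in $\bbZ[q]$ because $c_s(x)\in\{0,1\}$ and $\ell(ws)<\ell(w)$. In a term $\mu(z,ws)q^{(\ell(w)-\ell(z))/2}P_{x,z}$ of the correction sum, nonvanishing of $\mu(z,ws)$ forces $\ell(ws)-\ell(z)$ to be odd by the inductive hypothesis; since $s\in\rds(w)$ gives $\ell(w)=\ell(ws)+1$, the exponent $(\ell(w)-\ell(z))/2$ is then an integer, so that term also lies in $\bbZ[q]$. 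Hence $P_{x,w}\in\bbZ[q]$, closing the induction, and the Fact follows as in the previous paragraph.

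There is essentially no substantive obstacle here: the Fact is immediate once polynomiality in $q$ is in hand. The one thing worth being explicit about — and the only place the recursion is actually used — is the parity bookkeeping just noted, i.e.\ the compatibility between the parity constraint of this Fact and the integrality of the exponents $(\ell(w)-\ell(z))/2$ appearing in the correction term of \eqref{eq:std}; that is precisely what makes the inductive argument self-consistent.
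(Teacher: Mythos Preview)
Your proof is correct and follows the same approach as the paper, which simply remarks that the Fact ``follows directly from the definitions since $P_{x,w}$ is a polynomial in $q$ rather than $q^{1/2}$.'' Your inductive verification that $P_{x,w}\in\bbZ[q]$ is more than the paper supplies (it takes this as given by Theorem~\ref{thm:kldef}), but it is correct and does no harm.
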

We will refer to a pair $x,w$ for which which $\ell(w)-\ell(x)$ is
odd as an \emph{odd pair}.

The second follows from an important set of equalities satisfied by
the Kazhdan-Lusztig polynomials (see~\cite[Corollary 7.14]{Hum} for a
proof):
\begin{equation}\label{eq:xs}
  \pxw = \pxsw \text{ if } s\in\rds(w)\text{ and }
  \pxw = \psxw \text{ if } s\in\lds(w).
\end{equation}
Define the set of \emph{extremal pairs}
\begin{align}
  \EPn{n} = \{x\leq w\in S_n\times S_n:\, \lds(x)\supseteq
  \lds(w)\text{ and }\rds(x) \supseteq \rds(w)\}.
\end{align}

\begin{fact}\label{fact:notep}
  If $\ell(x)<\ell(w)-1$ and $(x,w)\not\in\EPn{n}$, then $\mu(x,w) = 0$.
\end{fact}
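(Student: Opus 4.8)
We may assume $x\leq w$, since otherwise $\pxw=0$ and $\mu(x,w)=0$. The plan is then to combine the symmetry \eqref{eq:xs} of Kazhdan--Lusztig polynomials under multiplication by a generator with the degree bound of Theorem~\ref{thm:kldef}. The subcase $\ell(w)-\ell(x)$ even is already Fact~\ref{fact:even}, so I concentrate on an odd pair and write $d=(\ell(w)-\ell(x)-1)/2$, a positive integer (positive because $\ell(x)<\ell(w)-1$), for the degree whose coefficient in $\pxw$ is $\mu(x,w)$.

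Unpack $(x,w)\notin\EPn{n}$: it says $\rds(x)\not\supseteq\rds(w)$ or $\lds(x)\not\supseteq\lds(w)$, so there is a generator $s$ with $s\in\rds(w)\setminus\rds(x)$ or $s\in\lds(w)\setminus\lds(x)$. These two cases are mirror images, so I run only the first. There $ws<w$ while $xs>x$, hence $\ell(xs)=\ell(x)+1$, and \eqref{eq:xs} gives $\pxw=\pxsw$. If $xs\not\leq w$ then $\pxsw=0$, so $\mu(x,w)$, a coefficient of $\pxw=\pxsw$, vanishes. Otherwise $xs\leq w$; and since $\ell(xs)=\ell(x)+1<\ell(w)$ (this is where $\ell(x)<\ell(w)-1$ is used) we have $xs<w$, so the degree bound of Theorem~\ref{thm:kldef} applies to $\pxsw$ and yields
\begin{equation*}
  \deg\pxsw\;\leq\;\frac{\ell(w)-\ell(xs)-1}{2}\;=\;\frac{\ell(w)-\ell(x)-2}{2}\;<\;d .
\end{equation*}
Hence the coefficient of $q^{d}$ in $\pxsw=\pxw$ is $0$, i.e.\ $\mu(x,w)=0$. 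The left-descent case is identical after replacing $xs$ by $sx$ and the first identity in \eqref{eq:xs} by the second.

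I do not foresee a genuine obstacle; the content is just that substituting the longer element $xs$ into the degree bound lowers the admissible degree strictly below $d$. The two points needing a moment's care are that $\ell(x)<\ell(w)-1$ is exactly what forces $xs\neq w$, so that the bound of Theorem~\ref{thm:kldef} (stated only for $x<w$) is legitimately available, and that the stray possibility $xs\not\leq w$ is harmless because it makes $\pxsw$ --- hence $\pxw$ --- identically zero. The rest is bookkeeping with parities and inequalities.
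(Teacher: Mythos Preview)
Your argument is correct and is essentially the paper's own proof: pick $s$ in the descent set of $w$ but not of $x$, invoke \eqref{eq:xs} to replace $x$ by $xs$ (or $sx$), and observe that the degree bound for the new pair falls strictly below $(\ell(w)-\ell(x)-1)/2$. Your extra care with the possibilities $x\not\leq w$ and $xs\not\leq w$ is harmless (indeed the latter cannot occur, by the lifting property), and the separation into parity cases is likewise unnecessary but not wrong.
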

To see why Fact~\ref{fact:notep} is true, suppose we have a
non-covering pair $x < w$ along with some $s\in \simref$ such that $xs
> x$ and $ws < w$.  The equality $\pxw = \pxsw$ combined with the
degree bound of Theorem~\ref{thm:kldef} implies, since
$\ell(w)-\ell(xs) = \ell(w)-\ell(x)-1$, that the coefficient of
$q^{(\ell(w)-\ell(x)-1)/2}$ in $\pxw$ must be zero.

According to computations in~\cite{Hammett}, there are approximately
800 billion comparable pairs $x,w$ in $S_{10}$.  It turns out that whenever
$w$ covers $x$, $\pxw = \mu(x,w) = 1$; ignore these pairs for the
moment.  Then, considering only pairs for which $\mu(x,w) > 0$,
Facts~\ref{fact:even} and~\ref{fact:notep} allow us to restrict our
attention to the odd extremal pairs.  The number of such pairs in
$S_{10}$ is a modest 626\,145\,374, yet still much larger than
$|\muset(10)|=3$.

The idea of considering equivalence classes to explain the redundancy
of $\mu$-values is not new.  Lascoux and Sch\"utzenberger, and
probably others, entertained the possibility that any pair $x,w$ with
$\mu(x,w) > 0$ could be generated from a cover by applying certain
operators (see the L-S operators below).  By construction, all pairs
generated in this way would have the same $\mu$-value.  Our main
contribution in this paper in this regard is to consider
``compression'' (and ``decompression'') \emph{in conjunction with} the
L-S operators and symmetry.  Our hope is that these classes are large
enough to fully explain the scarcity of distinct values of $\mu$.  The
three relations from which we build these classes exist already in the
literature.  We now describe them.

The simplest relations (of various symmetries) can be derived from the
definitions in~\cite{K-L1}.
\begin{fact}\label{fact:symm}
  Let $w_0$ denote the \emph{long word} $[n-1,n-2,\ldots,1,0]$ in
  $S_n$.  Then for $x,w\in S_n$,
  \begin{equation}
    \mu(x,w) = \mu(x^{-1},w^{-1}) = \mu(w_0w,w_0x) = \mu(ww_0,xw_0).
  \end{equation}
\end{fact}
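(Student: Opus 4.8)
The plan is to handle the three equalities separately, and I would derive the inverse symmetry $\mu(x,w)=\mu(x^{-1},w^{-1})$ from the stronger \emph{polynomial} identity $\pxw=\pxiwi$. For this, I would pass to the Hecke algebra $\calh$ of $S_n$ over $A=\bbZ[q^{\pm 1/2}]$: the $A$-linear anti-automorphism $\theta$ sending the standard basis element $T_w$ to $T_{w^{-1}}$ commutes with the bar involution, so it sends the Kazhdan--Lusztig basis element $C'_w$ to a bar-invariant element of $\calh$ whose expansion in the $T$-basis (after reindexing $x\mapsto x^{-1}$, using $\ell(x^{-1})=\ell(x)$ and $x\le w\iff x^{-1}\le w^{-1}$) has exactly the triangularity and degree bounds that characterize $C'_{w^{-1}}$; uniqueness then forces $\theta(C'_w)=C'_{w^{-1}}$, and comparing the coefficient of $T_{x^{-1}}$ gives $\pxw=\pxiwi$. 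Since $\ell(w^{-1})-\ell(x^{-1})=\ell(w)-\ell(x)$, reading off the coefficient of $q^{(\ell(w)-\ell(x)-1)/2}$ (see~\eqref{eq:mudef}) yields $\mu(x,w)=\mu(x^{-1},w^{-1})$. Granting the remaining identity $\mu(x,w)=\mu(w_0w,w_0x)$, the equality $\mu(x,w)=\mu(ww_0,xw_0)$ is then formal: using $(ww_0)^{-1}=w_0w^{-1}$ and $(xw_0)^{-1}=w_0x^{-1}$ and applying inverse symmetry, then the $w_0$-flip, then inverse symmetry again, $\mu(ww_0,xw_0)=\mu(w_0w^{-1},w_0x^{-1})=\mu(x^{-1},w^{-1})=\mu(x,w)$.

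So the crux is $\mu(x,w)=\mu(w_0w,w_0x)$. Here one cannot match whole polynomials: for instance in $S_4$, with $(x,w)=(0123,2301)$ one has $\pkl{x}{w}=1+q$, whereas the flipped pair $(w_0w,w_0x)=(1032,3210)$ has $P_{w_0w,w_0x}=1$. Instead I would work at the level of leading coefficients, via the inverse Kazhdan--Lusztig polynomials $Q_{x,w}$, defined by the inversion formula
\[
  \sum_{x\le z\le w}(-1)^{\ell(z)-\ell(x)}\,\pkl{x}{z}(q)\,Q_{z,w}(q)=\delta_{x,w}
\]
together with the bound $\deg Q_{x,w}\le(\ell(w)-\ell(x)-1)/2$ for $x<w$. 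We may assume $\ell(w)-\ell(x)$ is odd (otherwise both $\mu$-values vanish by Fact~\ref{fact:even}). Splitting off the $z=x$ and $z=w$ terms gives $Q_{x,w}=-(-1)^{\ell(w)-\ell(x)}\pkl{x}{w}-\sum_{x<z<w}(-1)^{\ell(z)-\ell(x)}\pkl{x}{z}Q_{z,w}$, and the degree bounds force $\deg(\pkl{x}{z}Q_{z,w})\le(\ell(z)-\ell(x)-1)/2+(\ell(w)-\ell(z)-1)/2<(\ell(w)-\ell(x)-1)/2$, so the middle terms contribute nothing to the coefficient of $q^{(\ell(w)-\ell(x)-1)/2}$; hence that coefficient of $Q_{x,w}$ equals $\mu(x,w)$. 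I would then invoke the classical identity $Q_{x,w}(q)=P_{w_0w,w_0x}(q)$; since $\ell(w_0x)-\ell(w_0w)=\ell(w)-\ell(x)$, the same coefficient of $P_{w_0w,w_0x}$ is $\mu(w_0w,w_0x)$, and the flip identity follows.

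The only genuinely external inputs are the two classical polynomial facts $\pxw=\pxiwi$ and $Q_{x,w}(q)=P_{w_0w,w_0x}(q)$ (see~\cite{K-L1}, \cite[\S 7]{Hum}, \cite[Ch.~5]{BjornerBrenti}); the rest is bookkeeping with $\ell(u^{-1})=\ell(u)$, $\ell(w_0u)=\ell(uw_0)=\ell(w_0)-\ell(u)$, and the fact that multiplication by $w_0$ reverses the Bruhat order (so $x\le w$ gives $w_0w\le w_0x$ and $ww_0\le xw_0$). The main obstacle is precisely the $w_0$-flip: unlike the inverse symmetry, it does not lift to an identity of full Kazhdan--Lusztig polynomials, so one is forced to introduce the inverse polynomials $Q_{x,w}$ and extract $\mu$ as a leading coefficient via the degree count above, rather than quoting a single symmetry of the $P$'s.
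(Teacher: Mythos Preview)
Your argument is correct. The inverse symmetry via the anti-automorphism $T_w\mapsto T_{w^{-1}}$ of $\calh$ is the standard proof of $\pxw=\pxiwi$, and your degree count showing that the top coefficient of $Q_{x,w}$ coincides with $\mu(x,w)$ is clean and valid (the strict inequality $(\ell(w)-\ell(x)-2)/2<(\ell(w)-\ell(x)-1)/2$ does the job for every intermediate $z$). The derivation of the $ww_0$-flip from the other two identities is also fine. Your cautionary example $(0123,2301)$ versus $(1032,3210)$ correctly illustrates why one cannot hope for a polynomial-level identity under the $w_0$-flip.

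As for comparison with the paper: there is nothing to compare. The paper does not prove Fact~\ref{fact:symm}; it simply records it as a known symmetry ``derived from the definitions in~\cite{K-L1}.'' Your write-up supplies what the paper omits. If anything, you could shorten it: since you are already willing to cite $Q_{x,w}=P_{w_0w,w_0x}$ from the literature, you might equally well cite $\pxw=\pxiwi$ directly (both appear in~\cite{K-L1} and in~\cite[\S7]{Hum}) rather than rederive it via $\theta$. But the extra detail does no harm.
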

Our second relation arises from the \emph{Lascoux-Sch\"utzenberger
  (L-S) operators} (which, their name notwithstanding, were known to
Kazhdan and Lusztig~\cite{K-L1}).  Define $\mathcal{R}_k$ be the set
of permutations $w$ for which $ws_k < w$ or $ws_{k+1} < w$, but not
both.  In other words, $\mathcal{R}_k$ consists of all permutations in
which $w(k)$, $w(k+1)$, $w(k+2)$ do \emph{not} appear in increasing or
decreasing order.  Then $wR_k$ is defined to be the unique element in
the intersection $\mathcal{R}_k \cap \{ws_k,ws_{k+1}\}$.  The
operators $R_k$ act ``on the right'' in the sense that they act on
positions.  Operators $L_k$ that act ``on the left'' can be defined
analogously by having them act on values.  More precisely, we set
$\mathcal{L}_k = \{w:\, w^{-1}\in \mathcal{R}_k\}$ and
$L_kw=(w^{-1}R_k)^{-1}$.  (These operators, elementary Knuth
transformations and their duals, are closely connected to the
Robinson-Schensted correspondence; for details, see
\cite{Fulton-book,Knuth70}.)  For $x,w\in S_n$, set
  \begin{equation}\label{eq:mus}
    \mu[x,w] = 
    \begin{cases}
      \mu(x,w), & \text{ if } x \leq w,\\
      \mu(w,x), & \text{ if } w \leq x,\\
      0, & \text{ if $x$ and $w$ are not comparable}.
    \end{cases}
  \end{equation}

\begin{fact}[\cite{K-L1}]\label{fact:samel}
  If $x,w\in \mathcal{L}_k$, then $\mu[x,w] = \mu[L_kx,L_kw]$.
  If $x,w\in \mathcal{R}_k$, then $\mu[x,w] = \mu[xR_k,wR_k]$.\\
\end{fact}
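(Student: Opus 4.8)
The plan is to reduce both assertions to a single induction built on the Kazhdan--Lusztig recursion \eqref{eq:std}. First, the statement for $L_k$ follows from the one for $R_k$: if $x,w\in\mathcal{L}_k$ then $x^{-1},w^{-1}\in\mathcal{R}_k$; inversion preserves the Bruhat order, so it respects the three cases of \eqref{eq:mus}, whence $\mu[a,b]=\mu[a^{-1},b^{-1}]$ by Fact~\ref{fact:symm}; and since $L_kx=(x^{-1}R_k)^{-1}$, the $R_k$-statement applied to $x^{-1},w^{-1}$ gives $\mu[x,w]=\mu[x^{-1},w^{-1}]=\mu[x^{-1}R_k,w^{-1}R_k]=\mu[L_kx,L_kw]$. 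So assume henceforth $x,w\in\mathcal{R}_k$, and set $s=s_k$, $t=s_{k+1}$, so that $sts=tst$ and $W'=\langle s,t\rangle$ has order $6$. The operator $R_k$ preserves each left coset $vW'$ (with $v$ its minimal representative) and, on the four ``interior'' elements of such a coset, is the involution $vs\leftrightarrow vst$, $vt\leftrightarrow vts$. A short inspection of the Bruhat order on $W'$ shows that within a fixed coset $R_k$ carries comparable pairs to comparable pairs and incomparable ones to incomparable ones; the cross-coset cases will be subsumed by the polynomial comparison below.

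I would then prove, by induction on $\ell(w)-\ell(x)$ (equivalently on $\ell(w)$), a refinement at the level of polynomials: for $x\le w$ with $x,w\in\mathcal{R}_k$, the images $xR_k,wR_k$ are again Bruhat-comparable, and $P_{x,w}$ equals the Kazhdan--Lusztig polynomial of the pair $\{xR_k,wR_k\}$ reordered so the first entry is the smaller one; extracting the coefficient of $q^{(\ell(w)-\ell(x)-1)/2}$ then yields $\mu[x,w]=\mu[xR_k,wR_k]$, this exponent being the common top slot on both sides by Fact~\ref{fact:even} together with the length bookkeeping of the previous paragraph (applying $R_k$ changes each of $\ell(x),\ell(w)$ by $\pm 1$, hence does not change the parity of $\ell(w)-\ell(x)$). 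The base cases $\ell(w)-\ell(x)\le 1$ are immediate. For the inductive step, let $s'\in\{s,t\}$ be the unique right descent of $w$ in $\{s,t\}$ and expand $P_{x,w}$ via \eqref{eq:std} using $s'$; expand the Kazhdan--Lusztig polynomial of $\{xR_k,wR_k\}$ via \eqref{eq:std} using the corresponding right descent of $wR_k$. Compare the two expansions term by term, with cases according to the right-descent patterns of $x$ and of $w$ relative to $\{s,t\}$. In the ``same-side'' configurations the leading terms $q^{c_{s'}(x)}P_{x,ws'}+q^{1-c_{s'}(x)}P_{xs',ws'}$ match after applying the inductive hypothesis to the strictly shorter pairs, using also the elementary equalities \eqref{eq:xs}; the correction sums $\sum_{z\le ws',\ zs'<z}\mu(z,ws')\,q^{(\ell(w)-\ell(z))/2}P_{x,z}$ match because $z\mapsto zR_k$ restricts to a bijection between the two index sets that preserves the exponents $(\ell(w)-\ell(z))/2$ and, by induction, the multiplicities $\mu(z,\cdot)$. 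In the ``opposite-side'' (mixed) configuration one first uses \eqref{eq:xs} to rewrite one recursion into the shape of the other and then matches as before.

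The main obstacle is precisely the mixed case together with the transport of the correction sum: one must check that $R_k$ carries the set of contributing $z$'s for $(x,ws')$ bijectively onto that for the image pair, that the powers of $q$ and the values $\mu(z,\cdot)$ are preserved so the inductive hypothesis applies, and --- the genuinely delicate point --- that the finitely many $z$ for which $zR_k$ is undefined (i.e.\ $z\notin\mathcal{R}_k$) either do not occur or cancel; this is also where one verifies the cross-coset instance of ``comparable $\mapsto$ comparable''. An alternative organization that streamlines the bookkeeping is to pass to the Hecke algebra: $\mu(z,w)$ appears as a structure constant for right multiplication by the generators $C'_s$ in the Kazhdan--Lusztig basis, and one shows that the $\ast$-operation attached to $\{s,t\}$ extends to an automorphism of the span of the $C'_y$ with $y$ in the relevant part of $\mathcal{R}_k$ that intertwines right multiplication by $C'_s$ and by $C'_t$; this is essentially the route taken in~\cite{K-L1}. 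In either formulation the case analysis on descent patterns is unavoidable, and I expect it to absorb nearly all of the work.
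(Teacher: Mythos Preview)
The paper does not give its own proof of this fact: it is simply cited from~\cite{K-L1}, so there is no argument here to compare yours against. What you describe as the ``alternative organization'' via the Hecke algebra --- reading $\mu$ as a structure constant for multiplication by $C'_s$ in the Kazhdan--Lusztig basis and checking that the $\ast$-operation intertwines multiplication by $C'_s$ and $C'_t$ --- is in fact the argument of~\cite{K-L1}, and it is the one that works.

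Your primary approach, however, has a genuine gap. The ``refinement at the level of polynomials'' you propose to induct on is false: the paper explicitly warns, immediately after Fact~\ref{fact:samel}, that ``the L-S operators \emph{do not} preserve the lower-order coefficients of Kazhdan--Lusztig polynomials.'' Concretely, write $s=s_k$, $t=s_{k+1}$. If $x=us$ with $u$ minimal in its $\langle s,t\rangle$-coset while $w=v'ts$ with $v'$ minimal in its coset (both patterns are compatible with $s\in\rds(x)\cap\rds(w)$, $t\notin\rds(x)\cup\rds(w)$, hence with extremality), then $xR_k=ust$ gains a length while $wR_k=v't$ loses one, so the gap drops from $d$ to $d-2$. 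Whenever such a pair has $\mu(x,w)>0$ and $d\geq 3$, we get $\deg P_{x,w}=(d-1)/2$ but $\deg P_{xR_k,wR_k}\leq (d-3)/2$, so the polynomials cannot coincide; yet your induction hypothesis asserts they do. The same length-gap shift shows your phrase ``this exponent being the common top slot on both sides'' is not justified: parity is preserved, but the exponent itself need not be. For the same reason, the ``match the two expansions term by term'' step cannot go through --- the two recursions you are comparing live at different degree bounds. (Comparability can also fail: $R_k$ may send a comparable pair with $\mu=0$ to an incomparable one, so the blanket claim ``$xR_k,wR_k$ are again Bruhat-comparable'' is too strong to carry the induction over all $x\le w$.)

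In short: drop the polynomial-level induction and promote your Hecke-algebra sketch to the main argument.
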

Note that the L-S operators \emph{do not} preserve the lower-order
coefficients of Kazhdan-Lusztig polynomials.  Also note that
$\mu(\cdot,\cdot)$ is not invariant under the L-S operators (consider
$L_0$ acting on the pair $(021,201)$).  In the rest of the paper, when
we refer to $\mu$ being constant on an equivalence class, we are
referring to $\mu[\cdot,\cdot]$ rather than $\mu(\cdot,\cdot)$.

\begin{figure}[htbp]
  \centering
      {\scalebox{.6}{\includegraphics{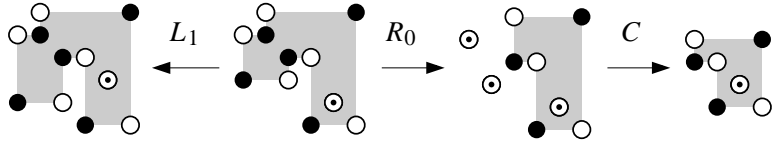}}}
      \caption{Example actions of $L_1$ and $R_0$ on the pair
        $x=243015$, $w=452310$.  The simultaneous compression of
        $(xR_0,wR_0)$ at two capitols is displayed in the rightmost
        figure.}
      \label{fig:lsex}
\end{figure}

Our third relation, unlike the L-S operators, has the potential to
take a pair in one symmetric group into a pair in a \emph{different}
symmetric group.

We say that a capitol for a pair $x,w\in S_n$ is \emph{naked} if it
lies within an unshaded region of the corresponding Bruhat picture.
The \emph{compression}, $(x^{\hat{\imath}},w^{\hat{\imath}})$, of
$(x,w)$ at the naked capitol $(i,x(i)) = (i,w(i))$ corresponds to
deleting the $i$-th columns and $w(i)$-th rows of $\digx$ and $\digw$.
Running the process in reverse is termed a \emph{decompression}.  The
pair $(x,w)$ is \emph{uncompressible} if its Bruhat picture has no
naked capitols.  Note that compressing a pair $x,w\in S_n$ produces a
pair in $S_{n-1}$ while decompression produces one in $S_{n+1}$.  In
figures, compression(s) will be denoted by a ``$C$'' and decompressions
by a ``$D$.''  A proof of the following can be found in~\cite[Lemma
39]{gwsb-msl}.

\begin{fact}\label{fact:compress}
  For any naked capitol $(i,x(i)) = (i,w(i))$, both $\pxw =
  P_{x^{\hat{\imath}},w^{\hat{\imath}}}$ and $\ell(w)-\ell(x) =
  \ell(w^{\hat{\imath}})-\ell(x^{\hat{\imath}})$.  Hence, $\mu(x,w) =
  \mu(x^{\hat{\imath}},w^{\hat{\imath}})$.
\end{fact}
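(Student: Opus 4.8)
The plan is to prove the two displayed assertions — the rest, $\mu(x,w)=\mu(x^{\hat{\imath}},w^{\hat{\imath}})$, then drops out of \eqref{eq:mudef}, since those assertions say the polynomials agree and that the exponent $(\ell(w)-\ell(x)-1)/2$ read off by $\mu$ is the same for $(x,w)$ and $(x^{\hat{\imath}},w^{\hat{\imath}})$. For the length identity I would argue by a direct inversion count. Write $c=x(i)=w(i)$ and $m_x:=|\{j<i:x(j)<c\}|$. Deleting the $i$-th column and $c$-th row of $\digx$ destroys precisely the inversions of $x$ involving position $i$, so $\ell(x)-\ell(x^{\hat{\imath}})=|\{j<i:x(j)>c\}|+|\{j>i:x(j)<c\}|=(i-m_x)+(c-m_x)$, using that no position but $i$ carries the value $c$; likewise $\ell(w)-\ell(w^{\hat{\imath}})=(i-m_w)+(c-m_w)$. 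Hence $\ell(w)-\ell(x)=\ell(w^{\hat{\imath}})-\ell(x^{\hat{\imath}})$ is equivalent to $m_x=m_w$, and since $m_x=i-r_x(i-1,c)$ while $\dxw(i,c)=\dxw(i-1,c)$ (because $x(i)=w(i)=c$), this is exactly $\dxw(i,c)=0$; as $\dxw$ is readily seen to be constant on the four cells $(i-1,c),(i,c),(i-1,c+1),(i,c+1)$, that is precisely the naked-capitol hypothesis.

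The one nontrivial input I would isolate is a stability statement: \emph{if $x\le z\le w$ then $z(i)=c$, and $(i,c)$ is a naked capitol for both $(x,z)$ and $(z,w)$.} This follows from $\dxw(p,q)=d_{x,z}(p,q)+d_{z,w}(p,q)$ together with $d_{x,z},d_{z,w}\ge 0$: wherever $\dxw$ vanishes so do $d_{x,z}$ and $d_{z,w}$, so $r_z$ agrees with $r_w$ at each of the four cells above; comparing the increments $r_z(i,c)-r_z(i-1,c)$ and $r_z(i,c+1)-r_z(i-1,c+1)$ then forces $z(i)\ge c$ and $z(i)\le c$, hence $z(i)=c$, and nakedness of $(i,c)$ for $(x,z)$ and $(z,w)$ is the vanishing of $d_{x,z}(i,c)$ and $d_{z,w}(i,c)$. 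A consequence I would record is that compression at $(i,c)$ is a well-defined bijection $[x,w]\to[x^{\hat{\imath}},w^{\hat{\imath}}]$ shifting every length by $i+c-2m_w$, and — a general feature of compression at a capitol — it carries the difference function of any pair carrying $c$ at position $i$ to that of its image under the obvious reindexing of cells, so it respects $\le$ and $\not\le$.

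For $\pxw=P_{x^{\hat{\imath}},w^{\hat{\imath}}}$ I would induct on $\ell(w)$ via \eqref{eq:std}. The cases $x\not\le w$ (both sides $0$, by the last sentence above) and $x=w$ (both sides $1$) are immediate. Otherwise pick $s=s_k\in\rds(w)$ with $k\notin\{i,i+1\}$; if $w$ has no such right descent, pick $s=s_k\in\lds(w)$ with $k\notin\{c,c+1\}$ and use the left-handed form of \eqref{eq:std} instead. Such an $s$ transposes two entries on one side of column $i$ (respectively, of row $c$), hence fixes the entry in that column (row) and leaves the values of $r_{\bullet}$ used above unchanged, so $(i,c)$ remains a naked capitol for $(x,ws)$ and $(xs,ws)$ when those are comparable (when not, the polynomials involved vanish on both sides), and, by the stability statement, for every $(x,z)$ and $(z,ws)$ contributing to \eqref{eq:std} — any contributing $z$ has $x\le z\le ws\le w$, so $z(i)=c$ and $d_{z,ws}(i,c)=r_w(i,c)-r_z(i,c)=0$. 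Applying the inductive hypothesis to $\pxws$, $\pxsws$, the $\pxz$'s, and the $\mu(z,ws)$'s, and checking that compression matches $c_s(\cdot)$, the condition $zs<z$, the replacement $ws\mapsto w^{\hat{\imath}}s'$, and the exponents $\tfrac{\ell(w)-\ell(z)}{2}$ (all consequences of the difference-function identities once $s$ and the summation index $z$ avoid column $i$, the latter thanks to the stability statement), the right-hand side of \eqref{eq:std} for $\pxw$ becomes, term for term, the right-hand side of \eqref{eq:std} for $P_{x^{\hat{\imath}},w^{\hat{\imath}}}$ with respect to $s'$, whence $\pxw=P_{x^{\hat{\imath}},w^{\hat{\imath}}}$.

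The hard part is the configuration this choice of $s$ cannot reach, namely $\rds(w)\subseteq\{s_i,s_{i+1}\}$ and $\lds(w)\subseteq\{s_c,s_{c+1}\}$ — so that $w$ and $w^{-1}$ are each increasing away from a block of three consecutive positions. In this ``doubly stuck'' situation $w$ is so restricted that one can settle the identity by hand; alternatively, the entire case split can be avoided by invoking that the $R$-polynomials of an interval, and hence its Kazhdan--Lusztig polynomials, depend only on its Bruhat graph with the reflection labels on its edges (see, e.g.,~\cite{BjornerBrenti}), since the stability statement shows compression is an isomorphism $[x,w]\cong[x^{\hat{\imath}},w^{\hat{\imath}}]$ of such labeled intervals shifting lengths by a constant, and then $\pxw=P_{x^{\hat{\imath}},w^{\hat{\imath}}}$ follows by downward induction from $P_{w,w}=1=P_{w^{\hat{\imath}},w^{\hat{\imath}}}$. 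Besides this, I expect only the routine-but-lengthy bookkeeping of paragraph three (that compression commutes with $\le$, $\rds/\lds$, $c_s(\cdot)$, and the exponents) to require care.
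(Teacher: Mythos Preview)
The paper does not prove Fact~\ref{fact:compress} at all; it simply refers the reader to~\cite[Lemma~39]{gwsb-msl}. So there is nothing here to compare against, and what you have written is effectively a free-standing proof.

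Your length computation and the stability statement (every $z\in[x,w]$ has $z(i)=c$ and inherits the naked capitol) are correct and are exactly the right lemmas to isolate. The inductive step via~\eqref{eq:std} is also correct once you have chosen $s=s_k$ with $k\notin\{i,i+1\}$ (or the left-handed analogue): the point that $r_{ws}$ and $r_w$ agree at the four cells around $(i,c)$, because positions $k-1,k$ lie on the same side of column~$i$, is the calculation that makes everything go through. The bookkeeping you flag (compression commutes with $c_s(\cdot)$, with $zs<z$, and with the exponents $(\ell(w)-\ell(z))/2$) is genuinely routine.

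The only soft spot is the ``doubly stuck'' case. Your route~(b) can be made rigorous, but not by a bare pointer to~\cite{BjornerBrenti}: what you need is Dyer's expression for $R_{u,v}$ as a signed sum over Bruhat-graph paths whose edge labels increase in a fixed reflection ordering, together with the (easy) observation that the lexicographic reflection ordering on $S_n$, restricted to transpositions avoiding position~$i$, reindexes to the lexicographic ordering on $S_{n-1}$. That combination, plus your stability statement, gives matching $R$-polynomials and hence matching $\pxw$'s. As written, ``depend only on its Bruhat graph with the reflection labels'' reads dangerously close to an appeal to combinatorial invariance, which is open. Route~(a) is also more tractable than you let on: if $\rds(w)\subseteq\{s_i,s_{i+1}\}$ and $\lds(w)\subseteq\{s_c,s_{c+1}\}$ with $w(i)=c$, then $w$ is essentially bigrassmannian of a very specific shape, and in several of the subcases $w^{\hat{\imath}}$ is the identity, forcing $x=w$. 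A short case analysis finishes it; you should carry that out rather than leave it as a remark.
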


\section{Results}

\subsection{Computation of Kazhdan-Lusztig polynomials}
\label{sec:klcomp}

Construction of the database encoding all Kazhdan-Lusztig polynomials
for pairs $x,w\in S_m$ with $m\leq 10$ proceeded by a direct
application of~\eqref{eq:std}.  Our algorithm is basically that of the
original recursion of Kazhdan and Lusztig~\cite{K-L1} as described
in~\cite{Hum}.  However, two aspects of our algorithm merit note.

First, equation~\eqref{eq:xs} allows us to focus on extremal pairs.
As in du~Cloux's program~\cite{ducloux}, when required to compute
$\pxw$ for any pair $(x,w)\not\in \EPn{n}$, we simply move $x$ up in
the Bruhat order through the action of elements of $\rds(w)$ and
$\lds(w)$.  Second, Fact~\ref{fact:compress} allows us to focus on
uncompressible pairs.  When required to compute the Kazhdan-Lusztig
polynomial for a compressible pair, we take the novel approach of
first compressing as much as possible to a pair $(x',w')$.  Often,
this resulting pair is not extremal.  Moving $x'$ up in the Bruhat
order can then lead to additional naked capitols.  The process can
repeat as illustrated in Figure~\ref{fig:rinserepeat}.

\begin{figure}[htbp]
  \centering
  {\scalebox{.6}{\includegraphics{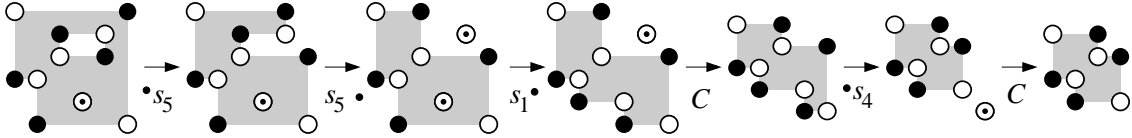}}}
  \caption{Example of how compression can lead to an extremal pair no
    longer being extremal.}
  \label{fig:rinserepeat}
\end{figure}

A great deal of redundancy is avoided by only keeping track of the
uncompressible extremal pairs.  In $S_{10}$, for example, 90 percent
of the extremal pairs are compressible.

Table~\ref{table:kldata} collects various data regarding
Kazhdan-Lusztig polynomials and their computation.  The first five
rows list the number of extremal pairs, uncompressible extremal pairs,
extremal pairs with positive $\mu$-value, irreducible pairs and
$\nkmin{n}{0}$-minimal pairs, respectively (these last two terms are
defined in Sections~\ref{sec:mup} and~\ref{sec:ecs}, respectively).  The final
two rows reflect (among \emph{all} $\pxw$ with $x,w\in S_n$) the
maximum coefficient encountered and the number of distinct,
non-constant polynomials appearing, respectively.  Due to memory
constraints, we have only partial results for $S_{11}$.

\begin{table}[ht]
  \centering
  \caption{Kazhdan-Lustzig data for various $S_n$.}
  \begin{tabular}{@{}lrrrrrrrr@{}} \toprule
    $n$ & 4 & 5 & 6 & 7 & 8 & 9 & 10 & 11\\\midrule
    $|\EPn{n}|$ & 6 & 122 & 2\,220 & 45\,184 & 1\,107\,636 & 33\,487\,176 & 1\,248\,544\,230 & 56\,786\,656\,838\\
    $|\uncEPn{n}|$ & 2 & 10 & 152 & 3\,114 & 84\,624 & 2\,896\,168 & 122\,345\,174 & 6\,252\,533\,464\\
    $|\EPmun{n}|$ & 2 & 2 & 30 & 176 & 2\,312 & 33\,550 & 664\,752 & \\
    $|\irrmu{n}|$ & 0 & 0 & 0 & 0 & 0 & 16 & 2\,663 & 54\,214 \\
    $|\nkmin{n}{0}\text{-minimal}|$ & 0 & 0 & 0 & 0 & 0 & 12 & 2\,512 & 51\,060 \\\midrule
    max coeff. & 1 & 2 & 4 & 15 & 73 & 460 & 4\,176 & $\geq$18\,915\\
    $|\{\pxw\}|$ & 1 & 4 & 16 & 97 & 1\,118 & 24\,361 & 981\,174
  \end{tabular}\label{table:kldata}
\end{table}

\begin{remark}\label{remark:mem}
  It is not clear how to fully take advantage of parallel computers in
  the computation of collections of Kazhdan-Lusztig polynomials via
  equation~\eqref{eq:std}.  The computation of $\pxw$ is not local in
  the sense that it is not clear which $\puv$ will be required during
  the recursive steps.  In fact, due to the structure of the recursive
  branching, any given $\puv$ may be required \emph{many} times.  As
  such, the most efficient approach appears to store the intermediate
  $\puv$ whenever possible.  For $S_{11}$, however, such a database
  (useful in this way only if kept in RAM) would run roughly 50
  gigabytes.
\end{remark}

\subsection{Computing possible $\mu$-values}
\label{sec:mup}

For $n \leq 10$, the possible $\mu$-values can be extracted directly
from the database.  For $n=11$, the memory constraints discussed in
Remark~\ref{remark:mem} prevented us from computing the
Kazhdan-Lusztig polynomials for all uncompressible extremal pairs.
Fortunately, the identities of Section~\ref{sec:props} provide a
simple way to filter out pairs $x,w$ for which $\mu(x,w)\not\in
\muset(n)\setminus\muset(n-1)$.

Define two pairs in $S_n$ to be \emph{$\simls$-equivalent} if they can
be connected by via a finite chain of L-S operators.  Denote the
corresponding equivalence classes by $\ecls{x}{w}$.  Let $x,w$ be an
odd pair.  Suppose $\ecls{x}{w}$ contains a pair $u,v$ that is either
\begin{enumerate}
  \item compressible, 
  \item not extremal and with $\ell(u) < \ell(v)-1$, or
  \item not related in the Bruhat order.
\end{enumerate}
In the first case, $\mu(x,w) \in \muset(m)$ for some $m < n$.  But the
following lemma already tells us that such values are contained in
$\muset(n)$.
\begin{lemma}
  For $n\geq 2$, $\muset(n-1)\subseteq \muset(n)$.
\end{lemma}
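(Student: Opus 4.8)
The plan is to realize each nonzero value of $\mu$ on $S_{n-1}$ by a pair in $S_n$ using a \emph{decompression}, i.e.\ the inverse of the compression operation appearing in Fact~\ref{fact:compress}. Fix $v\in\muset(n-1)$ and choose $x\le w$ in $S_{n-1}$ with $\mu(x,w)=v$. Define $x',w'\in S_n$ by $x'(i)=x(i)$ and $w'(i)=w(i)$ for $0\le i\le n-2$ and $x'(n-1)=w'(n-1)=n-1$; in other words, extend each permutation by a fixed point sitting in the top-right corner of its Bruhat picture. I would then argue $\mu(x',w')=v$ by showing that $(x,w)$ is the compression of $(x',w')$ at the naked capitol $(n-1,n-1)$ and invoking Fact~\ref{fact:compress}.

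First I would record the routine preliminaries that $x'\le w'$ and $\ell(w')-\ell(x')=\ell(w)-\ell(x)$. For the Bruhat comparison: when $p\le n-2$ only the indices $0,\dots,p$ enter the definitions of $r_{x'}$ and $r_{w'}$, so $r_{x'}(p,q)=r_x(p,q)$ and $r_{w'}(p,q)=r_w(p,q)$, whence $d_{x',w'}(p,q)=\dxw(p,q)\ge 0$; and when $p=n-1$ both $x'$ and $w'$ are bijections onto $\{0,\dots,n-1\}$, so $r_{x'}(n-1,q)=r_{w'}(n-1,q)$ and $d_{x',w'}(n-1,q)=0$. The length equality holds because a corner fixed point forms no inversion with any earlier entry; in any case it is forced by Fact~\ref{fact:compress} once the nakedness step is in place.

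The one point needing care is that $(n-1,n-1)$ is a \emph{naked} capitol of the Bruhat picture of $(x',w')$. It is a capitol since $x'(n-1)=w'(n-1)=n-1$. For nakedness, I would note that $d_{x',w'}(n-1,q)=0$ for every $q$ (the last column), and, since every entry of $x$ and $w$ is at most $n-2$, also $d_{x',w'}(p,q)=0$ for every $p$ and every $q\ge n-1$ (the top strip); hence the whole corner region containing this capitol is unshaded. Consequently the compression of $(x',w')$ at $(n-1,n-1)$ — deleting the last column and the top row of both permutation matrices — is precisely $(x,w)$, and Fact~\ref{fact:compress} yields $\mu(x',w')=\mu(x,w)=v$. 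Thus $v\in\muset(n)$, and since $v\in\muset(n-1)$ was arbitrary, $\muset(n-1)\subseteq\muset(n)$. The case $n=2$ is vacuous, as $\muset(1)=\emptyset$.

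The main — essentially the only — obstacle is matching the above computation to the precise definition of ``naked capitol'' and the shading convention for Bruhat pictures, so as to be certain the corner capitol qualifies; everything else is an unwinding of the definition of $r_w$ together with a single application of Fact~\ref{fact:compress}.
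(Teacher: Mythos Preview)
Your proposal is correct and follows exactly the approach the paper takes: decompress an arbitrary pair $x,w\in S_{n-1}$ by adjoining a fixed point at the top-right corner, then invoke Fact~\ref{fact:compress}. The paper's own proof is a two-sentence version of what you wrote; your additional verification that the corner capitol is naked is routine and matches the intended argument.
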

\begin{proof}
  Any pair $x,w\in S_{n-1}$ can be decompressed by adding a capitol in
  the $n$-th row and $n$-th column.  The lemma then follows by
  Fact~\ref{fact:compress}.
\end{proof}
In the second and third cases, $\mu(x,w)$ must be $0$.  So, in looking for
elements of $\muset(n)\setminus \muset(n-1)$, we can restrict our
attention to odd extremal pairs in $S_n$ for which none of the three
above cases apply.  Such pairs will be termed \emph{irreducible}.  It
is significantly faster to compute whether a pair is irreducible than
it is to compute the corresponding Kazhdan-Lusztig polynomial.

Even though there are over half a million $\mu$-positive pairs in
$S_{10}$, there are only $2\,663$ irreducible pairs.  The computation
of the Kazhdan-Lusztig polynomials for the $54\,214$ irreducible
pairs in $S_{11}$ can be done in a few thousand hours of CPU time.

This completes the description of the worked required for the first
two parts of Theorem~\ref{thm:numval}.  The elements of $\muset(12)$
given there stem from individual Kazhdan-Lusztig polynomials we chose
to compute guided by Conjecture~\ref{conj:main}.  See
Table~\ref{table:klpolys} for representative pairs yielding these
$\mu$-values.  (In the table, the polynomial $a_0 + a_1q + a_2q^2 +
\cdots$ is described by its coefficient list: $a_0,a_1,a_2,\ldots$.)

\begin{table}[ht]
  \centering
  \caption{Known values of $\mu(x,w)$ and pairs that achieve them.}
  \begin{tabular}{@{}lrccl@{}} \toprule
    $n$ & $\mu$ & $x$ & $w$ & $\pxw$\\\midrule
    1 & 1 & 01 & 10 & 1 \\\midrule
    10 & 4 & 0432187659 & 4678091235 &  1,14,60,96,43,4\\
    & 5 & 2106543987 & 5678901234 &  1,10,43,86,84,37,5\\\midrule
    11 & 3 & 108765432a9 & 789a4560123  & 1,14,82,247,420,420,235,60,3\\
    & 18 & 21076543a98 & 792a4560813 &  1,16,112,442,1038,1485,1309,698,200,18\\
    & 24 & 1065432a987 & 689a1345702 &  1,17,129,556,1416,2143,1919,993,269,24\\
    & 28 & 21076543a98 & 6789a123450 &  1,18,145,646,1654,2516,2283,1197,325,28\\\midrule
    12 & 6 & 107654328ba9 & b6789a123450 & 1,24,267,1772,7554,21518,41845,55849,\\
    &    &              &              &    \quad 50705,30547,11637,2552,259,6\\
    & 7 & 21076543ba98 & b6789a501234 & 1,4,18,83,233,514,1045,1571,1648,1373,\\
    &    &              &              &    \quad 869,341,73,7\\
    & 8 & 054321ba9876 & 9ab834567012 & 1,11,59,213,579,1216,1920,2216,1823,\\
    &    &              &              &    \quad 1034,386,89,8\\
    & 23 & 543210ba9876 & 9ab345678012 & 1,13,71,207,337,311,153,23\\
    & 25 & 10765432ba98 & 9ab345678012 & 1,24,253,1527,5662,13109,18983,16997,\\
    &    &              &              &    \quad 9166,2836,453,25\\
    & 26 & 10765432ba98 & 789ab1234560 & 1,21,191,933,2561,4008,3573,1735,387,26\\
    & 27 & 10765432ba98 & b6789a012345 & 1,21,191,933,2554,3994,3583,1772,415,27\\
    & 158 & 210876543ba9 & b6789a123450 & 1,24,266,1752,7380,20722,39703,52400,\\
    &    &              &              &    \quad 47388,28667,10969,2301,158\\
    & 163 & 21076543ba98 & b6789a123450 & 1,23,250,1682,7564,23555,51779,80733,\\
    &    &              &              &    \quad 88768,67850,35154,11769,2280,163\\\midrule
    13 & 796 & 321087654cba9 & c789ab1234560 & 1,27,347,2808,15615,62330,\\
    &    &              &              &    \quad 183306,401999,658761,802957,\\
    &    &              &              &    \quad 721035,469418,215528,66010,12044,796\\\bottomrule
  \end{tabular}\label{table:klpolys}
\end{table}

\subsection{Equivalence classes of pairs}
\label{sec:ecs}

Let $\mup{n} = \EPmun{n} \cup \{(x,w):\, w\text{ covers }x\}$ denote the
set of pairs $(x,w)\in S_n\times S_n$ for which $\mu(x,w) > 0$.  Write
$\mupall$ for the union of $\mup{n}$ as $n$ runs over the positive
integers.  The identities in Facts~\ref{fact:symm},~\ref{fact:samel}
and~\ref{fact:compress} allow us to define the following equivalence
relation on the elements of $\mupall$: Two pairs in $\mupall$ are
$\simlspm$-equivalent if they can be connected by a finite chain
consisting of LS-moves, compressions/decompressions and symmetries.
(I.e., $\simlspm$ is the transitive closure of the union of the
relations arising from Facts~\ref{fact:symm},~\ref{fact:samel}
and~\ref{fact:compress}.)

By construction, $\mu[\cdot,\cdot]$ is constant on
$\simlspm$-equivalence classes.  Hence, the number of classes
intersecting $S_m$ for $m\leq n$ gives an upper bound on the size of
$\muset(n)$.  Unfortunately, we have no algorithm (in the precise
sense of the word) for computing the equivalence classes: To show
$(x,w)$ and $(y,v)$ are equivalent, we must provide a chain $(x,w)
\simlspm (x',w') \simlspm \cdots \simlspm (y,v)$ where each successive
pair is connected by either an L-S operator, a compression, a
decompression or a symmetry.  However, we have no bound on how large a
symmetric group we might have to pass through in order to construct
such a chain; we can \emph{always} decompress.  In other words, given
pairs with the same $\mu$-value, we have no effective method for
showing that they are \emph{not} in the same $\simlspm$-equivalence
class.  In light of this problem, we define $\mu$-positive pairs
$(x,w)\in S_m$ and $(x',w')$ in $S_n$ to be
\emph{$\simlspmk{k}$-equivalent} if they can be connected by a chain
that does not pass through $S_{\max(m,n)+k+1}$.  An
\emph{$\nkmin{n}{k}$-minimal} pair is one whose
$\simlspmk{k}$-equivalence class does not intersect $S_m$ with $m <
n$.  The irreducible pairs in $S_n$ with positive $\mu$-value are the
$\nkmin{n}{0}$-minimal pairs.

\begin{table}[ht]
  \centering
  \caption{Coalescence of $\simlspmk{k}$-equivalence classes.}
  \begin{tabular}{@{}crrrrr@{}} \toprule
    & & & \multicolumn{3}{c}{k}\\\cmidrule(l){4-6}
    $n$ & $\mu$ & No. & 0 & 1 & 2 \\\midrule
    9 & 1 & 12 & 3 & s\phantom{+0} & s\phantom{+0}\\\midrule
    10 & 1 & 586 & 31 & s+1 & s+1 \\
       & 4 & 428 & 10 & 3 & 2 \\
       & 5 & 1498 & 27 & 2 & 1\\\midrule
    11 & 1 & 26336 & 419 &s+1 & s+1\\
       & 3 & 2466 & 36 & 2 & 1\\
       & 4 & 5166 & 59 & s+3 & s+1\\
       & 5 & 17052 & 170 & s\phantom{+0}& s\phantom{+0}\\
       & 18 & 16 & 1 & 1 & 1\\
       & 24 & 16 & 1 & 1 & 1\\
       & 28 & 8 & 2 & 2 & 2\\
  \end{tabular}\label{tab:coalesce}
\end{table}

Let $A$ be the $(|\mup{n}|+1)\times (|\mup{n}|+1)$ $0$--$1$ matrix
with the first row and column indexed by a ``sink'' and all other
rows/columns indexed by the elements of $\mup{n}$.  The sink will
identify all pairs in $\mup{n}$ that are not $\nkmin{n}{k}$-minimal.
There is a straightforward algorithm for determining the
$\nkmin{n}{k}$-minimal equivalence classes.
\begin{enumerate}
  \item Pick $k$.  Initialize all entries of $A$ to $0$.
  \item\label{step:bfs} For each pair $(x,w)\in \mup{n}$ (indexing
    row/column $i$), perform a breadth-first search of the members of
    its $\simlspmk{k}$-equivalence class by considering L-S moves,
    symmetries, compressions and decompressions.  (Only allow
    decompressions in the case that the resulting pair lies in $S_m$
    for some $m\leq n+k$.)
  \item For each pair $(y,v)$ (indexing row/column $j$) encountered in
    Step~\ref{step:bfs}, set $A(i,j) = 1$.
  \item If $(x,w)$ is related to a pair in some $S_m$, $m < n$, then
    set $A(i,1) = 1$.
  \item We then compute the connected components using Matlab's
    \texttt{graphconncomp} command.  (Since $A$ may be missing edges
    originating at the sink, we use the `weak' option.)
\end{enumerate}

Table~\ref{tab:coalesce} illustrates how the various equivalence
classes coalesce for $9\leq n\leq 11$ as $k$ ranges from $0$ to $2$.
An $s$ entry (for ``sink'') indicates that some of the pairs are not
$\nkmin{n}{k}$-minimal.  Theorem~\ref{thm:numequiv} is immediate.  We
computed the corresponding $\nkmin{n}{3}$-minimal classes for all
cases except the $\mu=1$, $n=11$ class for which we ran out of memory.
For the computed cases, the $\nkmin{n}{3}$-minimal classes equaled the
$\nkmin{n}{2}$-minimal classes.  Figure~\ref{fig:bothminreps} gives
the Bruhat pictures for (non-canonical) representatives of each
$\nkmin{n}{2}$-minimal class.

\begin{figure}[htbp]
  \centering
  {\scalebox{.4}{\includegraphics{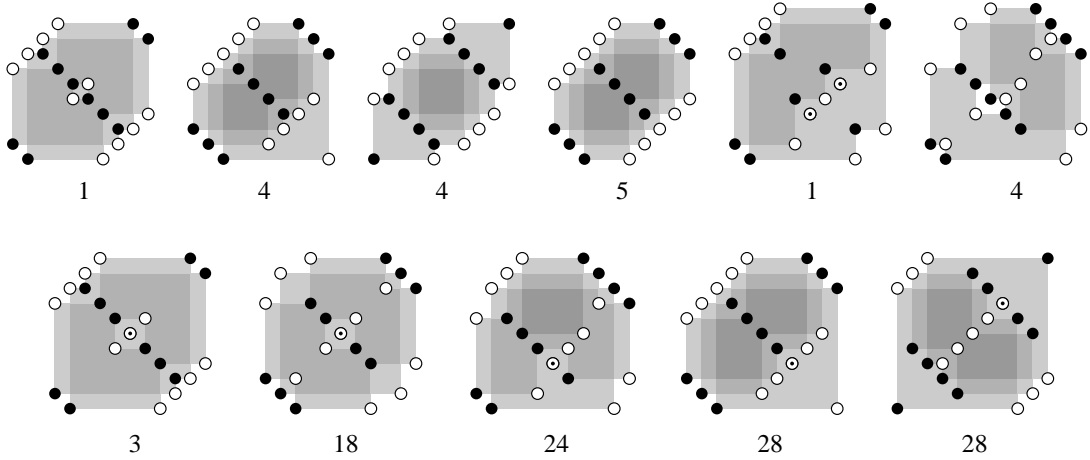}}}
  \caption{Representatives of $\nkmin{n}{2}$-minimal classes.}
  \label{fig:bothminreps}
\end{figure}

We suspect that some of these classes may coalesce further as $k$ is
increased.  However, already at $k=3$ computations become demanding.
For example, consider the $\nkmin{11}{0}$-minimal pair
$x=21076543a98$, $w=6789a123450$.  The size of its
$\simlspmk{k}$-equivalence class grows from 1\,032 to 879\,316 to
331\,361\,376 as $k$ goes from 1 to 2 to 3.

As an example of coalescence, we consider one of the twelve
$\nkmin{9}{0}$-minimal pairs in $S_9$.  Figure~\ref{fig:9reduct}
demonstrates the equality $\ec{216540873}{567812340} = \ec{01}{10}$.
Any chain connecting these two pairs must pass through $S_{10}$.  This
example also serves to illustrate that the Kazhdan-Lusztig polynomials
are not preserved by the L-S operators; $P_{01,10}(q) = 1$ while
  \begin{equation*}
    P_{216540873,567812340}(q) = 1 + 8q + 16q^2 + 11q^3 + q^4.
  \end{equation*}
  \begin{figure}[ht]
    \centering
    {\scalebox{.5}{\includegraphics{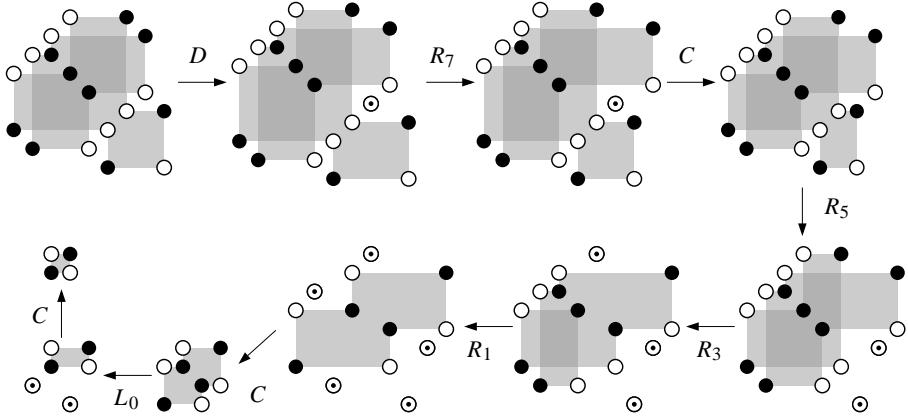}}}
    \caption{Reduction of $(x,w) = (216540873,567812340)$.}
    \label{fig:9reduct}
  \end{figure}

\section{Representatives of equivalence classes}
\label{sec:reps}

Given a composition $\alpha = (\alpha_1,\alpha_2,\ldots,\alpha_k)
\models n$, let $x_\alpha$ be the permutation 
\begin{equation*}
  [n-\alpha_1,n-\alpha_1+1,\ldots,n-1,n-\alpha_1-\alpha_2,
    n-\alpha_1-\alpha_2+1,\ldots, n-\alpha_1-1,\ldots,0,1,\ldots,\alpha_k-1].
\end{equation*}
Let $X_n = \{x_\alpha:\, \alpha\models n\}$.  We define a 
\emph{crosshatch pair} to be a pair $x\leq w$ for which
$xw_0,w\in X_n$.

\begin{conjecture}\label{conj:main}
  Every $\simlspm$-equivalence class contains a crosshatch pair.
\end{conjecture}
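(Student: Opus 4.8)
Conjecture~\ref{conj:main} is open, so what follows is a plan of attack rather than a proof. The goal is to show that every $\simlspm$-class has a representative which \emph{is} a crosshatch pair, by building an explicit reduction out of the moves of Facts~\ref{fact:symm},~\ref{fact:samel} and~\ref{fact:compress}, whose termination would be the heart of the argument. A useful preliminary observation, since it frees us to apply symmetry at will, is that the set of crosshatch pairs is itself closed under all four operations of Fact~\ref{fact:symm}: from $x_\alpha^{-1} = x_{\alpha^r}$ and $w_0 x_\alpha w_0 = x_{\alpha^r}$ (writing $\alpha^r$ for the reversed composition) one sees that inversion and conjugation by $w_0$ permute $X_n$, and then a direct check shows a pair $(x_\gamma w_0, x_\beta)$ is carried to another pair of the same shape. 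Thus, up to symmetry, the normal form we are aiming for is the literal crosshatch picture: the circles of $\digw$ forming increasing runs stacked from top-left to bottom-right, the disks of $\digx$ forming decreasing runs stacked the other way.

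The core of the reduction should be a two-phase normalization. Starting, via Fact~\ref{fact:compress}, from an uncompressible representative $(x,w)$ of the class, and taking it (by Fact~\ref{fact:notep}) to be an odd extremal pair, one first \emph{normalizes $w$}: attach to the pair a defect statistic $\Phi_w(x,w)$ counting the indices at which three consecutive positions of $w$ --- or, dually, the positions of three consecutive values --- fail to sit as membership in $X_n$ requires, and show that whenever $\Phi_w>0$ there is an operator $R_k$ or $L_k$, applied if necessary after a single decompression that inserts a capitol bringing both $x$ and $w$ into $\mathcal{R}_k$ (resp.\ $\mathcal{L}_k$), whose net effect strictly lowers $\Phi_w$; symmetries are used to relocate an offending triple to a corner of the picture where such a move is legal. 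When $\Phi_w=0$ we have $w\in X_n$. One then \emph{normalizes $x$} by the analogous process, but now restricting to L-S moves that keep $w$ inside the relevant $\mathcal{R}_k$ or $\mathcal{L}_k$ --- concretely, moves acting on triples that lie within a single block of $w$, where $w$ is monotone and so is left untouched --- driving $x$ to the reverse of a co-layered permutation, i.e.\ $xw_0\in X_n$. Since $\mu[\cdot,\cdot]$ is constant along the whole chain, the only remaining point is to verify that the resulting pair is Bruhat-comparable in the orientation $x_\gamma w_0\le x_\beta$ demanded by the definition --- which requires care, because an L-S move can reverse the Bruhat order within a $\mu[\cdot,\cdot]$-pair.

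The main obstacle --- and the reason this is still a conjecture --- is termination, aggravated by the entanglement of the two phases. An L-S operator requires \emph{both} coordinates to lie in $\mathcal{R}_k$ (or $\mathcal{L}_k$), so a move that improves $w$ may be blocked by $x$ and conversely, and the only universal remedy, decompression, enlarges the ambient symmetric group with no a priori bound --- exactly the explosion recorded in the discussion after Table~\ref{tab:coalesce}, where a single $11$-minimal class balloons from a thousand to over three hundred million pairs as one allows more decompressions. In particular there is no naive induction on $n$: an $n$-minimal class never descends to a smaller symmetric group, and decompression moves the current pair \emph{up}, so one needs a measure of ``distance to a crosshatch pair'' that decreases even across a decompression step, and producing such a well-founded quantity is the crux. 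A reasonable intermediate goal, consistent with Table~\ref{tab:coalesce}, is to prove the statement first for the finite relations $\simlspmk{0}$ (no decompressions), where the two-phase scheme above has a genuine chance of terminating, and then to classify precisely the obstructions --- of the type exemplified by $L_0$ acting on $(021,201)$ and by the $S_9$ reduction in Figure~\ref{fig:9reduct} --- that force a detour through a larger symmetric group.
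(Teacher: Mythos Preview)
The statement is a \emph{conjecture} in the paper, not a theorem: the paper offers no proof and explicitly leaves it open, presenting only computational evidence (crosshatch representatives found for most $\nkmin{n}{2}$-minimal classes in Figure~\ref{fig:bothminreps} and Figure~\ref{fig:chreps}, with one $n=11$, $\mu=4$ class still unresolved). You correctly acknowledge this at the outset and present a research plan rather than a proof, so there is no ``paper's own proof'' to compare against.

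Your preliminary observation that the set of crosshatch pairs is closed under the symmetries of Fact~\ref{fact:symm} is correct and not recorded in the paper; the key identities $x_\alpha^{-1}=x_{\alpha^r}$, $w_0x_\alpha w_0=x_{\alpha^r}$, and $X_nw_0=w_0X_n$ do the work. The two-phase normalization idea (drive $w$ into $X_n$, then drive $x$ into $X_nw_0$ using only L-S moves interior to the blocks of $w$) is a natural strategy and is consistent with the shape of the representatives the paper actually finds. You also put your finger on exactly the obstruction the paper highlights: decompression is unbounded, and the explosion of class sizes (the $1\,032 \to 879\,316 \to 331\,361\,376$ example) is precisely why the paper cannot verify the conjecture even for $n=11$.

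The genuine gap --- which you yourself flag --- is that no well-founded measure is produced that decreases across decompression steps, and without one the scheme is a heuristic rather than an argument. One further caution: your second phase assumes that once $w\in X_n$ there are enough L-S moves \emph{within} a single block of $w$ to normalize $x$; but inside a block $w$ is strictly increasing, so $w\notin\mathcal{R}_k$ for any $k$ internal to that block, and the move is simply unavailable on the pair. You would need to manufacture such moves via decompressions that temporarily break $w$ out of $X_n$, which re-entangles the two phases and feeds directly back into the termination problem.
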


In particular, while we conjecture that each $n$-minimal class has a
crosshatch pair, there may only be such pairs in $S_m$ with $m > n$.
Even after factoring out symmetry, such putative representatives are
not unique.  Recall that Figure~\ref{fig:bothminreps} gives
representatives for the various $\nkmin{n}{2}$-minimal equivalence
classes that we have been able to compute.  For five of these classes
(one $n=10$, $\mu=4$ class and the $\mu=1,4,18,24$ classes for
$n=11$), the representative given in that figure is not a crosshatch
pair.  Figure~\ref{fig:chreps} remedies this for four of the classes
by giving crosshatch representatives lying in $S_{m}$ with $m$ equal
to 12 or 13.  The class we were unable to find a crosshatch
representative for is the $n=11$, $\mu=4$ class.  However, given our
above remark about the sizes of $\simlspmk{k}$-equivalence classes, we
do not feel this is a significant mark against
Conjecture~\ref{conj:main}.  The three possibilities are that this
class is not $\nkmin{11}{k}$-minimal for some $k > 3$, that its
smallest crosshatch pair lies in $S_m$ for some $m\geq 15$, or that it
does not contain a crosshatch pair at all.

\begin{figure}[htbp]
  \centering
      {\scalebox{.5}{\includegraphics{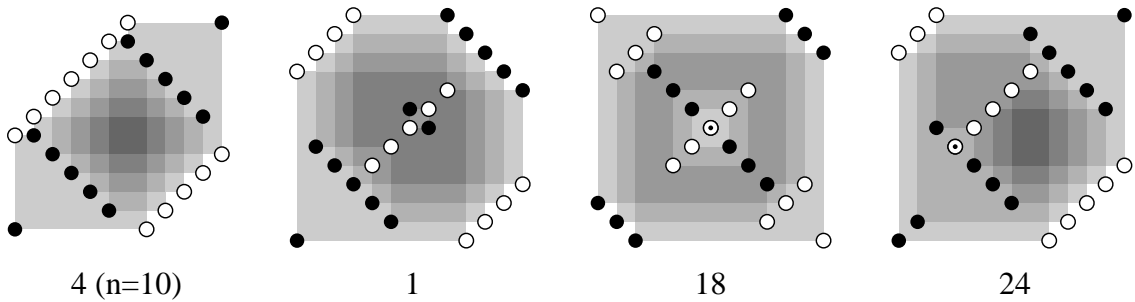}}}
      \caption{Crosshatch representatives.}
      \label{fig:chreps}
\end{figure}

In light of Conjecture~\ref{conj:main}, it is reasonable to ask if
there are simple criteria for the $\mu$-value of a crosshatch pair to be
nonzero.  Or even more ambitiously, to ask for a simple closed formula
for the value of $\mu$ on such an interval.  We note here that Brenti
(along with various coauthors --- see~\cite{brenticm,brentiinc}) has
closed formulas for Kazhdan-Lusztig polynomials based on alternating
sums of paths that might be specialized for this purpose.

It would also be interesting to understand geometrically why such
intervals appear so prevalent among pairs with $\mu$-values greater
than 1; the crosshatch intervals are minimal coset representatives for
certain Richardson varieties with respect to independent partial flag
manifolds~\cite{knutson}.  Of course, everything in this section may
be attributable to working with values of $n$ that are too small.  On
the other hand, crosshatch pairs are relatively rare even for these
small values of $n$.  Of the 1.2 billion extremal pairs in $S_{10}$
only $4\,708$ are crosshatch pairs.

\section{Acknowledgments} 
I would like to thank Allen Knutson for helpful discussions.  The
hash-generation code released by Bob Jenkins~\cite{jenkins} and the
hash table code released by Troy D. Hanson~\cite{uthash} are much more
robust and efficient than anything I could have written.  I am also
grateful to the University Vermont and its Vermont Advanced Computing
Center for generously providing access to its cluster.

\bibliographystyle{plain}
\bibliography{klmu}


\end{document}